\documentclass[11pt, a4paper]{article}

\usepackage{amsmath}
\usepackage{amssymb}
\usepackage{amsthm}
\usepackage{appendix,xcolor}
\usepackage{verbatim}

\topmargin=-2cm
\oddsidemargin=-0.60cm
\textheight=26cm
\textwidth=17cm
\parskip4pt plus2pt minus2pt
\parindent0mm

\newtheorem{theorem}{Theorem}[section]
\newtheorem{corollary}{Corollary}[section]

\newtheorem{definition}{Definition}[section]
\newtheorem{lemma}{Lemma}[section]
\newtheorem{remark}{Remark}[section]
\newtheorem{example}{Example}[section]
\newtheorem*{thmMarkus}{Theorem A}

\begin{document}
\title{On Strong Markushevich bases $\{t^{\lambda_n}\}_{n=1}^{\infty}$
in their closed span in $L^2 (0, 1)$ and characterizing a subspace of $H^2 (\mathbb{D})$}
\author{Elias Zikkos\\Department of Mathematics\\
Khalifa University of Science and Technology, Abu Dhabi, United Arab Emirates\\
email address:  elias.zikkos@ku.ac.ae and eliaszikkos@yahoo.com}

\maketitle

\begin{abstract}
Let $\Lambda=\{\lambda_n\}_{n=1}^{\infty}$ be a strictly increasing sequence of positive real numbers
such that $\sum_{n=1}^{\infty}\frac{1}{\lambda_n}<\infty$ and $\inf(\lambda_{n+1}-\lambda_n)>0$. We investigate properties of the closed span
of the system $\{t^{\lambda_n}\}_{n=1}^{\infty}$ in $L^2 (0,1)$, denoted by $\overline{M_{\Lambda}}$, and of the unique biorthogonal family $\{r_n (t)\}_{n=1}^{\infty}$ to the system $\{t^{\lambda_n}\}_{n=1}^{\infty}$ in $\overline{M_{\Lambda}}$.
We show that the system $\{t^{\lambda_n}\}_{n=1}^{\infty}$ is a strong Markushevich basis in $\overline{M_{\Lambda}}$
and we obtain a series representation for functions in $\overline{M_{\Lambda}}$.
We also construct a general class of operators on $\overline{M_{\Lambda}}$ that admit spectral synthesis. In particular,
for all $\rho \in (0,1)$ the operator $T_{\rho}(f)=f(\rho x)$ on $\overline{M_{\Lambda}}$ admits spectral synthesis.
In addition, we characterize a certain subspace of the classical Hardy space $H^2 (\mathbb{D})$.
Under the extra assumption that $\Lambda\subset\mathbb{N}$, let $H^2(\mathbb{D}, \Lambda)$ consist of functions $f$ in $H^2(\mathbb{D})$
so that the Fourier coefficients $c_n$ of the boundary function $f(e^{i\theta})$ vanish for all $n\notin \Lambda$.
We prove that $f\in H^2(\mathbb{D}, \Lambda)$ if and only if $f\in\overline{M_{\Lambda}}$ and $\sum_{n=1}^{\infty}\left| \langle f, r_n\rangle \right|^2<\infty$, where $\langle f, g\rangle= \int_{0}^{1} f(t)\cdot \overline{g(t)}\, dt$.
\end{abstract}

Mathematics Subject Classification: 30B60, 30B50, 47A10. 

Keywords: M\"{u}ntz-Sz\'{a}sz theorem, Closed Span, Biorthogonal systems, Strong Markushevich bases, Spectral Synthesis, Compact Operators.

\section{Introduction}
\setcounter{equation}{0}

Let $\Lambda=\{\lambda_n\}_{n=1}^{\infty}$ be a strictly increasing sequence of positive real numbers such that
\begin{equation}\label{LKcondition}
\sum_{n=1}^{\infty}\frac{1}{\lambda_n}<\infty\qquad\text{and}\qquad \inf(\lambda_{n+1}-\lambda_n)>0.
\end{equation}
We denote by $M_{\Lambda}$ the system $\{e_n(t) : = t^{\lambda_n}\}_{n=1}^{\infty}$ and by $\overline{M_{\Lambda}}$ the closed span of
$M_{\Lambda}$ in $L^2 (0,1)$. Due to $(\ref{LKcondition})$, it follows from the M\"{u}ntz-Sz\'{a}sz theorem that $\overline{M_{\Lambda}}$
is a proper subspace of $L^2 (0,1)$.  In fact, for $f\in \overline{M_{\Lambda}}$ we have the
``Clarkson-Erd\H{o}s-Schwartz Phenomenon'' as follows :

``let $\mathbb{D}:=\{z:\, |z|<1\}$ be the unit disk. Then $f$ extends as an analytic function
in the slit disk $\mathbb{D}^*:=\mathbb{D}\setminus (-1,0)$, so that
\[
f(z)=\sum_{n=1}^{\infty}c_n z^{\lambda_n}\qquad z\in \mathbb{D}^*
\]
with the series converging uniformly on compact subsets of $\mathbb{D}^*$."

(see \cite{CE}, \cite[Corollary 6.2.4]{Gurariy}, \cite[Theorem 6.4]{BE1997JAMS}).

\begin{remark}
One may consult \cite{BE,Gurariy,Almira} on M\"{u}ntz-Sz\'{a}sz topics as well as
\cite{Chalendar,Noor,Lefevre2,Fricain,Agler1,Agler2} on more recent related results.
\end{remark}

Under the condition $(\ref{LKcondition})$, it is also known that $M_{\Lambda}$ is a minimal system in $\overline{M_{\Lambda}}$:
every element of $M_{\Lambda}$ does not belong to the closed span of the remaining elements in $L^2 (0,1)$
(see \cite[Proposition 6.1.4]{Gurariy}). Now, every minimal system in $L^2 (0,1)$ has a biorthogonal family,
that is there exists a system of functions $r_{\Lambda}:=\{r_n(t)\}_{n=1}^{\infty}\subset L^2 (0,1)$ such that
\[
\langle e_n, r_m\rangle =\begin{cases} 1, & m=n, \\  0, & m\not=n,\end{cases}\qquad \text{where}\qquad
\langle f, g\rangle := \int_{0}^{1} f(t)\cdot \overline{g(t)}\, dt,\quad f, \, g \in L^2 (0,1).
\]
Since the system $M_{\Lambda}$ is complete and minimal in $\overline{M_{\Lambda}}$, then $M_{\Lambda}$ has a $unique$ biorthogonal family
$r_{\Lambda}\subset \overline{M_{\Lambda}}$.

We aim to study certain properties of $\overline{M_{\Lambda}}$. We first show in Lemma $\ref{Bio}$ how the unique
biorthogonal $r_{\Lambda}$ family is constructed and obtain the sharp upper bound $(\ref{rnkbound})$ for the norm of its elements.
Building on the ``Clarkson-Erd\H{o}s-Schwartz Phenomenon'', we then prove in Theorem $\ref{biorthogonalsystem}$ that every function in $\overline{M_{\Lambda}}$ admits the series representation $(\ref{representationf})$.
We then show that the closed span of the family $r_{\Lambda}$
in $L^2 (0,1)$ is equal to the space $\overline{M_{\Lambda}}$, hence $M_{\Lambda}$ and $r_{\Lambda}$ are $Markushevich\,\, Bases$ for $\overline{M_{\Lambda}}$. In fact, they are also
\[
Strong\,\, Markushevich\,\, Bases\qquad\text{for}\qquad \overline{M_{\Lambda}}.
\]
This means that for any disjoint union of two sets $N_1$ and $N_2$, where
$\mathbb{N}=N_1\cup N_2$, the closed span of the mixed system
\[
\{e_n:\,\, n\in N_1\}\cup\{r_{n}:\,\, n\in N_2\}
\]
in $L^2 (0,1)$ is equal to the space $\overline{M_{\Lambda}}$.
The phrase $``hereditarily\,\, complete\,\, system''$ is also commonly used instead of ``strong Markushevich basis''.

\begin{theorem}\label{biorthogonalsystem}
Let $\Lambda=\{\lambda_n\}_{n=1}^{\infty}$ be a strictly increasing sequence of positive real numbers
that satisfies $(\ref{LKcondition})$. Let $r_{\Lambda}$ be the unique biorthogonal family of $M_{\Lambda}$ in the space $\overline{M_{\Lambda}}$.
Then the following are true:

$(I)$  Each function $f\in \overline{M_{\Lambda}}$ extends as an analytic function in the slit disk
$\mathbb{D}^*:=\mathbb{D}\setminus (-1,0)$, so that

\begin{equation}\label{representationf}
f(z)=\sum_{n=1}^{\infty}\langle f, r_{n} \rangle z^{\lambda_n},
\end{equation}
converging uniformly on compact subsets of $\mathbb{D}^*$.

$(II)$ For each function $f\in L^2 (0, 1)$ its associated series
\[
f^*(x):=\sum_{n=1}^{\infty} \langle f, r_{n} \rangle x^{\lambda_n}
\]
is analytic in $\mathbb{D}^*$ and belongs to the space $\overline{M_{\Lambda}}$.

$(III)$ The system $M_{\Lambda}$ is a Markushevich basis for $\overline{M_{\Lambda}}$.

$(IV)$ The system $M_{\Lambda}$ is a strong Markushevich basis for $\overline{M_{\Lambda}}$.
\end{theorem}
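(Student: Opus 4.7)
The plan is to establish $(I)$ first and then derive $(II), (III), (IV)$ in sequence, with $(IV)$ requiring the most care.

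For $(I)$, the Clarkson-Erd\H{o}s-Schwartz phenomenon already provides $f(z)=\sum_{n=1}^{\infty}c_n z^{\lambda_n}$ on $\mathbb{D}^*$, uniformly on compact subsets; the task is to identify $c_n=\langle f,r_n\rangle$. I would approximate $f$ in $L^2(0,1)$ by finite M\"{u}ntz polynomials $f_k=\sum_{n=1}^{N_k}a_{n,k}\,e_n$. Biorthogonality on the finite sum gives $a_{m,k}=\langle f_k,r_m\rangle$, so continuity of $\langle\cdot,r_m\rangle$ yields $a_{m,k}\to\langle f,r_m\rangle=:b_m$ for each $m$. Using the bound $(\ref{rnkbound})$ on $\|r_m\|_2$ from Lemma $\ref{Bio}$ and $|b_m|\leq\|f\|\,\|r_m\|$, the series $\sum_m b_m z^{\lambda_m}$ converges absolutely and uniformly on compact subsets of $\mathbb{D}^*$. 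A dominated-convergence argument controlling the tail of $\sum_m(a_{m,k}-b_m)z^{\lambda_m}$ uniformly in $k$, combined with Clarkson-Erd\H{o}s-Schwartz applied to $f-f_k$ (giving $f_k\to f$ uniformly on compacts), then shows $f(z)=\sum_m b_m z^{\lambda_m}$. By uniqueness of the M\"{u}ntz expansion (which follows from the asymptotic $z^{\lambda_{n+1}}=o(z^{\lambda_n})$ as $z\to 0^+$), $b_m=c_m$, which is $(\ref{representationf})$.

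Parts $(II)$ and $(III)$ follow quickly. Let $P$ denote the orthogonal projection of $L^2(0,1)$ onto $\overline{M_\Lambda}$; for any $f\in L^2(0,1)$, $Pf\in\overline{M_\Lambda}$ and $\langle f,r_n\rangle=\langle Pf,r_n\rangle$ since $r_n\in\overline{M_\Lambda}$. Applying $(I)$ to $Pf$ yields $Pf(z)=\sum\langle f,r_n\rangle z^{\lambda_n}=f^*(z)$ on $\mathbb{D}^*$, so $f^*$ is analytic on $\mathbb{D}^*$ and $f^*=Pf\in\overline{M_\Lambda}$, which is $(II)$. For $(III)$, $M_\Lambda$ is complete in $\overline{M_\Lambda}$ by construction; to confirm the Markushevich basis property I also need $r_\Lambda$ to be total, and this is immediate from $(I)$ because $\langle g,r_n\rangle=0$ for all $n$ forces the analytic extension of $g$ to be zero.

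For $(IV)$, fix a partition $\mathbb{N}=N_1\sqcup N_2$ and let $g\in\overline{M_\Lambda}$ satisfy $\langle g,e_n\rangle=0$ for $n\in N_1$ and $\langle g,r_n\rangle=0$ for $n\in N_2$. Writing $c_n=\langle g,r_n\rangle$, part $(I)$ gives $g(t)=\sum_{n\in N_1}c_n\,t^{\lambda_n}$ on $(0,1)$, uniformly on compact subsets. Since each $r_n$ is real-valued (by uniqueness of $r_\Lambda$ together with closedness of $\overline{M_\Lambda}$ under complex conjugation), $\overline{g(t)}=\sum_{n\in N_1}\overline{c_n}\,t^{\lambda_n}$ on $(0,1)$, and for each $\epsilon>0$ the uniform convergence on $[0,1-\epsilon]$ justifies
\[
\int_0^{1-\epsilon}|g(t)|^2\,dt \;=\; \sum_{n\in N_1}\overline{c_n}\int_0^{1-\epsilon}g(t)\,t^{\lambda_n}\,dt \;=\; -\sum_{n\in N_1}\overline{c_n}\int_{1-\epsilon}^{1}g(t)\,t^{\lambda_n}\,dt,
\]
the second equality using $\langle g,e_n\rangle=0$ for $n\in N_1$. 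Sending $\epsilon\to 0$ raises the left-hand side to $\|g\|_2^2$, and the main obstacle is showing the right-hand side vanishes. For this I would combine the Cauchy-Schwarz estimate $\big|\int_{1-\epsilon}^{1}g\,t^{\lambda_n}\,dt\big|\leq\|g\|_{L^2(1-\epsilon,1)}\,\|t^{\lambda_n}\|_{L^2(1-\epsilon,1)}$ with the bound $(\ref{rnkbound})$ on $\|r_n\|_2$ from Lemma $\ref{Bio}$ (via $|c_n|\leq\|g\|\,\|r_n\|$) to obtain enough summability for a dominated convergence argument. The outcome is $\|g\|_2^2=0$, hence $g=0$, establishing the strong Markushevich basis property.
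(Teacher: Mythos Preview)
Your treatment of $(II)$ and $(III)$ matches the paper's, and your approach to $(I)$, though more elaborate than the paper's, is sound: the uniform tail control works because on $\{|z|\le r\}$ one has $|a_{m,k}z^{\lambda_m}|\le C\,[r(1+\delta)]^{\lambda_m}$ with $r(1+\delta)<1$.  (The paper argues $(I)$ more directly: it shows via Lemma~\ref{converse} that $f-c_n e_n$ lies in the $L^2$-closed span of $M_\Lambda\setminus\{e_n\}$, whence $\langle f-c_n e_n,r_n\rangle=0$ and $c_n=\langle f,r_n\rangle$.)

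The real problem is in $(IV)$.  Your final step asserts that
\[
\sum_{n\in N_1}\overline{c_n}\int_{1-\epsilon}^{1}g(t)\,t^{\lambda_n}\,dt\ \longrightarrow\ 0
\]
by dominated convergence, using $|c_n|\le\|g\|\,\|r_n\|\le \|g\|\,m_\delta(1+\delta)^{\lambda_n}$ together with the Cauchy--Schwarz bound $\big|\int_{1-\epsilon}^1 g\,t^{\lambda_n}\big|\le\|g\|_{L^2(1-\epsilon,1)}\,\|t^{\lambda_n}\|_{L^2(1-\epsilon,1)}$.  But the only $\epsilon$-free estimate for the second factor is $\|t^{\lambda_n}\|_{L^2(0,1)}=(2\lambda_n+1)^{-1/2}$, which decays only polynomially in $\lambda_n$, while $(1+\delta)^{\lambda_n}$ grows exponentially.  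No summable dominating sequence exists, so the interchange of limit and sum is unjustified, and the argument breaks down precisely at the step you flagged as ``the main obstacle.''

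The paper avoids this difficulty by invoking Lemma~\ref{converse} once more: since $g\in L^2(0,1)$ and $g(t)=\sum_{n\in N_1}c_n t^{\lambda_n}$ pointwise on $[0,1)$, Lemma~\ref{converse} places $g$ in the \emph{$L^2$-closed span} of $\{e_n:n\in N_1\}$.  Then for any $\epsilon>0$ there is a finite linear combination $g_\epsilon$ of those $e_n$ with $\|g-g_\epsilon\|<\epsilon$, and since $\langle g,e_n\rangle=0$ for $n\in N_1$ one gets $\|g\|^2=\langle g,g-g_\epsilon\rangle\le\|g\|\,\epsilon$, hence $g=0$.  The missing ingredient in your argument is exactly this passage from a pointwise M\"untz expansion to $L^2$-membership in the corresponding closed span; once you have it, no tail estimate is needed at all.
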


\begin{remark}
To this end we note that Fricain and Lef\`{e}vre  (see \cite[Corollary 3.9]{Fricain}) proved  by a different method
that the system $M_{\Lambda}$ is a Markushevich basis for $\overline{M_{\Lambda}}$.
\end{remark}
In addition, we point out that in \cite[pages 99-112]{Mash} A. Boivin and C. Zhu in their study
\[
\text{Bi-orthogonal Expansions in the Space $L^2 (0, \infty)$}
\]
explore a similar problem to ours where the exponential system $\{e^{-\lambda_n t}\}_{n=1}^{\infty}$
and the space $L^2 (0, \infty)$ are considered instead of $\{t^{\lambda_n}\}_{n=1}^{\infty}$
and $L^2 (0, 1)$. A biorthogonal family is introduced on page $100$ and eventually the authors prove in
\cite[Corollary 1, page 109]{Mash} that the closed spans of the exponential system and its biorthogonal family in  $L^2 (0, \infty)$
coincide. Thus the exponential family is a Markushevich basis for that closure.

Now, by Theorem $\ref{biorthogonalsystem}$ if a sequence $\Lambda$ satisfies $(\ref{LKcondition})$ then the family $M_{\Lambda}$ is a strong Markushevich basis for the space $\overline{M_{\Lambda}}$.
\[
\text{But is $M_{\Lambda}$ a $basis$ for $\overline{M_{\Lambda}}$?}
\]
The answer is $\bf Yes$ if and only if $\Lambda$ is a lacunary sequence (see \cite[Theorem 9.2.2]{Gurariy}),
meaning that there is some $q>1$ so that for all $n\in\mathbb{N}$ one has $\frac{\lambda_{n+1}}{\lambda_n}>q$.
The same conclusion was drawn by Boivin and Zhu in \cite[Theorem 3, page 112]{Mash}.

\begin{remark}
In \cite{Zikkos2024JMAA} we proved that if $\Lambda$ satisfies the condition $(\ref{LKcondition})$,
then the exponential system $\{e^{\lambda_n t}\}_{n=1}^{\infty}$ is hereditarily complete in its closed span in $L^2 (a,b)$
when $(a, b)$ is a $\bf bounded$ interval on the real line. The proof of Theorem $\ref{biorthogonalsystem}$ in this paper
follows closely the scheme of proving \cite[Lemma 3.1 and Theorem 1.1]{Zikkos2024JMAA}.
\end{remark}

Hereditary completeness (strong M basis) is closely related to the $Spectral\,\, Synthesis$ problem for linear operators
\cite{Markus1970,Baranov2013,Baranov2015}.
Consider a bounded linear operator $T$ in a separable Hilbert space $\cal H$ which has a set of eigenvectors
which is complete in $\cal H$. We say that $T$ admits $Spectral\,\, Synthesis$ if for any invariant subspace $A$ of $T$,
the set of eigenvectors of $T$ contained in $A$ is complete in $A$.
J. Wermer proved in \cite{Wermer1952} that an operator which is compact and normal
admits spectral synthesis. Wermer also gave an example of a bounded normal operator which does not admit spectral synthesis \cite[Theorem 2]{Wermer1952}. We also point out that A. S. Markus proved that there exist compact operators which do not admit spectral synthesis
(see \cite[Theorem 4.2]{Markus1970}).

Markus also proved the following result.
\begin{thmMarkus}\cite[Theorem 4.1]{Markus1970}\label{Compact}
Let $\cal{H}$ be a separable Hilbert space and let $T:\cal{H}\to\cal{H}$ be a compact operator such that

(i) its kernel is trivial and

(ii) its non-zero eigenvalues are simple.

Let $\{f_n\}_{n\in\mathbb{N}}$ be the corresponding sequence of eigenvectors.
Then $T$ admits Spectral Synthesis if and only if $\{f_n\}_{n\in\mathbb{N}}$ is hereditarily complete in $\cal{H}$.
\end{thmMarkus}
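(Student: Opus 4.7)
The plan is to exploit the natural rank-one Riesz projections attached to the simple non-zero eigenvalues of $T$. Under the hypotheses, the adjoint $T^{*}$ is also compact with trivial kernel (density of $\mathrm{range}(T)$ follows from the completeness of $\{f_n\}$) and simple non-zero eigenvalues, and one can choose eigenvectors $\{g_n\}$ of $T^{*}$ satisfying $T^{*}g_n=\overline{\lambda_n}g_n$, normalized so that $\langle f_n,g_m\rangle=\delta_{nm}$. For each $n$, the Riesz projection
\[
P_n=\frac{1}{2\pi i}\oint_{\gamma_n}(zI-T)^{-1}\,dz,
\]
where $\gamma_n$ is a small circle around $\lambda_n$ separating it from the rest of $\sigma(T)$, is the rank-one operator $P_n h=\langle h,g_n\rangle f_n$. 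Two properties are crucial: $P_n$ commutes with $T$, and every closed $T$-invariant subspace $A$ is automatically $P_n$-invariant.

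For the direction ``hereditary completeness implies spectral synthesis'', let $A$ be a closed $T$-invariant subspace, set $N_1:=\{n:\, f_n\in A\}$ and $N_2:=\mathbb{N}\setminus N_1$, and suppose $h\in A$ is orthogonal to every $f_n$ with $n\in N_1$. For $n\in N_2$ the vector $P_n h=\langle h,g_n\rangle f_n$ lies in $A$ by $P_n$-invariance, so $f_n\notin A$ forces $\langle h,g_n\rangle=0$. Thus $h$ is orthogonal to the entire mixed biorthogonal system $\{f_n:n\in N_1\}\cup\{g_n:n\in N_2\}$, whose closed span equals $\mathcal{H}$ by hereditary completeness; hence $h=0$, and the eigenvectors of $T$ contained in $A$ are complete in $A$.

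For the converse, fix any partition $\mathbb{N}=N_1\cup N_2$ and define
\[
E:=\{h\in\mathcal{H}:\, \langle h,g_n\rangle=0\ \text{for every}\ n\in N_2\}.
\]
The identity $\langle Th,g_n\rangle=\overline{\lambda_n}\langle h,g_n\rangle$ shows $E$ is closed and $T$-invariant. Since $\ker T=\{0\}$, every eigenvector of $T$ is a scalar multiple of some $f_m$, and biorthogonality yields $f_m\in E$ iff $m\in N_1$. Applying spectral synthesis to $E$ therefore gives $\overline{\mathrm{span}}\{f_n:n\in N_1\}=E$. Because $E^{\perp}=\overline{\mathrm{span}}\{g_n:n\in N_2\}$, one computes
\[
\bigl(\overline{\mathrm{span}}\{f_n:n\in N_1\}+\mathrm{span}\{g_n:n\in N_2\}\bigr)^{\perp}=E^{\perp}\cap E=\{0\},
\]
i.e., the closed span of $\{f_n:n\in N_1\}\cup\{g_n:n\in N_2\}$ is all of $\mathcal{H}$, which is precisely hereditary completeness.

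The main obstacle will be the $P_n$-invariance claim in the first direction: one must verify that every closed $T$-invariant subspace $A$ is also invariant under each resolvent $(zI-T)^{-1}$ with $z\notin\sigma(T)$. The clean route is to observe that $T|_A$ is a compact operator on the Hilbert space $A$ whose spectrum is contained in $\sigma(T)\cup\{0\}$, so for $z\notin\sigma(T)\cup\{0\}$ the Fredholm alternative makes $(zI-T)|_A$ a bijection of $A$ onto itself, and its inverse therefore coincides with $(zI-T)^{-1}|_A$. Once this standard technical point is in hand, the rest of the proof is the pair of short duality arguments above, reflecting the symmetry between the biorthogonal sequences $\{f_n\}$ and $\{g_n\}$.
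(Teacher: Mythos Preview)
The paper does not prove Theorem~A at all: it is quoted verbatim from Markus \cite[Theorem 4.1]{Markus1970} and then used as a black box in the proof of Theorem~\ref{Spectral Synthesis}. There is therefore no ``paper's own proof'' to compare your attempt against.

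For what it is worth, your argument is the standard Riesz-projection proof and is essentially correct. Two small remarks. First, you should make explicit that completeness of $\{f_n\}$ in $\mathcal{H}$ is part of the standing hypotheses (the paper imposes it in the paragraph preceding Theorem~A); you rely on it when you write ``density of $\mathrm{range}(T)$ follows from the completeness of $\{f_n\}$'' to obtain $\ker T^{*}=\{0\}$, and again implicitly when you assert that the biorthogonal vectors $g_n$ exist and are unique. Second, in the converse direction your identification of the eigenvectors of $T$ lying in $E$ uses both (i) and (ii): trivial kernel rules out eigenvectors for the eigenvalue $0$, and simplicity forces any eigenvector for $\lambda_m$ to be a scalar multiple of $f_m$; you state this, but it is the one place where both hypotheses are genuinely needed, so it is worth flagging. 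The resolvent-invariance point you single out as the ``main obstacle'' is handled correctly via the Fredholm alternative for $T|_A$.
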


Motivated by the Markus result and since $M_{\Lambda}$ is a strong Markushevich basis for $\overline{M_{\Lambda}}$,
we now present a general class of compact but not normal operators on $\overline{M_{\Lambda}}$ that admit spectral synthesis.
A particular example is the operator $T_{\rho}(f):=f(\rho x)$ for $\rho\in (0, 1)$.

\begin{theorem}\label{Spectral Synthesis}
Let $\Lambda=\{\lambda_n\}_{n=1}^{\infty}$ be a strictly increasing sequence of positive real numbers
that satisfies $(\ref{LKcondition})$. Let $r_{\Lambda}$ be the unique biorthogonal family of $M_{\Lambda}$ in the space $\overline{M_{\Lambda}}$.
Fix a sequence $\{u_n\}_{n=1}^{\infty}$ of distinct non-zero complex numbers such that
\begin{equation}\label{un}
|u_n|\le  \rho^{\lambda_n}\qquad \text{for\,\, some}\qquad 0<\rho<1.
\end{equation}
Then the operator $T: \overline{M_{\Lambda}}\to \overline{M_{\Lambda}}$ defined as
\begin{equation}\label{operator}
T(f(x)):=\sum_{n=1}^{\infty} \langle f , r_n\rangle\cdot u_n\cdot  x^{\lambda_n},
\end{equation}
is compact, not normal, and admits Spectral Synthesis.
\end{theorem}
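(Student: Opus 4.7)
The plan is to reduce the spectral-synthesis claim directly to Theorem A by first identifying the spectral data of $T$. Biorthogonality $\langle e_k, r_n\rangle=\delta_{kn}$ gives immediately that $T(e_k)=u_k e_k$ for every $k$, so each $e_k(x)=x^{\lambda_k}$ is an eigenvector with eigenvalue $u_k$. To see these are all the non-zero eigenvectors, I would expand an arbitrary $f\in\overline{M_\Lambda}$ via Theorem $\ref{biorthogonalsystem}$(I) as $f=\sum_{n}\langle f,r_n\rangle e_n$ and match coefficients in $Tf=\mu f$, using uniqueness of the Markushevich expansion, to force $\langle f,r_n\rangle(u_n-\mu)=0$ for all $n$. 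Hence $f$ must be a scalar multiple of a single $e_k$ with $\mu=u_k$. Since the $u_n$ are distinct and non-zero, this simultaneously shows that the non-zero eigenvalues of $T$ are simple and that $\ker T=\{0\}$.

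Next I would prove compactness (and simultaneously that $T$ actually maps into $\overline{M_\Lambda}$) by approximating $T$ with the finite-rank operators $T_N(f):=\sum_{n=1}^{N}\langle f,r_n\rangle u_n x^{\lambda_n}$. Combining Cauchy--Schwarz $|\langle f,r_n\rangle|\le\|f\|\,\|r_n\|$, the hypothesis $|u_n|\le\rho^{\lambda_n}$, and $\|x^{\lambda_n}\|_{L^2}=(2\lambda_n+1)^{-1/2}$, the triangle inequality yields
\[
\|Tf-T_Nf\|_{L^2}\le\|f\|\sum_{n=N+1}^{\infty}\frac{\|r_n\|\,\rho^{\lambda_n}}{\sqrt{2\lambda_n+1}}.
\]
The sharp bound $(\ref{rnkbound})$ on $\|r_n\|$ from Lemma $\ref{Bio}$, together with the gap hypothesis $\inf(\lambda_{n+1}-\lambda_n)>0$, should be enough to ensure that $\|r_n\|$ grows strictly slower than any geometric rate $\tilde{\rho}^{-\lambda_n}$ with $\tilde{\rho}\in(\rho,1)$, so the factor $\rho^{\lambda_n}$ forces the tail to vanish. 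Consequently $T_N\to T$ in operator norm, which gives compactness and also shows that $Tf\in\overline{M_\Lambda}$ since each $T_N f$ already lies in the closed span of $M_\Lambda$. Carrying out this tail estimate and correctly matching $(\ref{rnkbound})$ against $\rho^{\lambda_n}$ is the main technical obstacle in the proof.

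Finally, $T$ cannot be normal: a normal operator has pairwise orthogonal eigenvectors across distinct eigenvalues, yet $\langle e_k,e_j\rangle=(\lambda_k+\lambda_j+1)^{-1}\ne 0$ for $k\ne j$. All hypotheses of Theorem A are now in place: $T$ is compact, its kernel is trivial, its non-zero eigenvalues are simple, and the corresponding sequence of eigenvectors is exactly $M_\Lambda$, which by Theorem $\ref{biorthogonalsystem}$(IV) is hereditarily complete in $\overline{M_\Lambda}$. Theorem A therefore yields that $T$ admits Spectral Synthesis, completing the proof.
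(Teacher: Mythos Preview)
Your proposal is correct and follows essentially the same strategy as the paper: verify the hypotheses of Theorem~A (compactness via finite-rank approximation using the bound $(\ref{rnkbound})$ against $|u_n|\le\rho^{\lambda_n}$, trivial kernel, simple non-zero eigenvalues $u_k$ with eigenvectors $e_k$), observe non-normality from $\langle e_k,e_j\rangle\ne 0$, and invoke hereditary completeness from Theorem~$\ref{biorthogonalsystem}$(IV). The only cosmetic difference is that the paper first computes $T^*r_k=\overline{u_k}\,r_k$ and then uses $\langle Tf,r_n\rangle=\langle f,T^*r_n\rangle=u_n\langle f,r_n\rangle$ to obtain the coefficient identity $(u_n-\mu)\langle f,r_n\rangle=0$, whereas you reach the same identity directly from the series representation; both routes are equivalent.
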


\begin{corollary}
Let $\Lambda=\{\lambda_n\}_{n=1}^{\infty}$ be a strictly increasing sequence of positive real numbers
that satisfies $(\ref{LKcondition})$. Let $r_{\Lambda}$ be the unique biorthogonal family of $M_{\Lambda}$ in the space $\overline{M_{\Lambda}}$.
Then for any fixed $0<\rho<1$,
the operator $T: \overline{M_{\Lambda}}\to \overline{M_{\Lambda}}$, where $T_{\rho}(f):=f(\rho x)$,
is compact, not normal, and admits Spectral Synthesis.
\end{corollary}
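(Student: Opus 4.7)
The plan is to recognize the corollary as a direct specialization of Theorem \ref{Spectral Synthesis} with the particular choice $u_n := \rho^{\lambda_n}$. First I would check that this choice satisfies the hypotheses of the theorem: since $0<\rho<1$ and the $\lambda_n$ are strictly increasing, the sequence $\{\rho^{\lambda_n}\}_{n=1}^{\infty}$ is strictly decreasing in modulus, hence consists of distinct nonzero complex numbers; and the bound $(\ref{un})$ becomes the equality $|u_n|=\rho^{\lambda_n}$, which clearly holds.

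The main thing to verify is that the operator $T_\rho f(x):=f(\rho x)$ actually coincides, as a map on $\overline{M_\Lambda}$, with the operator $(\ref{operator})$ for this choice of $u_n$. Given $f\in\overline{M_\Lambda}$, Theorem \ref{biorthogonalsystem}$(I)$ gives
\[
f(z)=\sum_{n=1}^{\infty}\langle f, r_n\rangle z^{\lambda_n},
\]
with uniform convergence on compact subsets of $\mathbb{D}^*$. In particular, the closed interval $[0,\rho]$ is compact in $\mathbb{D}^*$, so substituting $z=\rho x$ with $x\in(0,1)$ yields
\[
f(\rho x)=\sum_{n=1}^{\infty}\langle f, r_n\rangle \rho^{\lambda_n}x^{\lambda_n},
\]
with uniform convergence on $[0,1]$, and hence convergence in $L^2(0,1)$. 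This is exactly the right-hand side of $(\ref{operator})$ with $u_n=\rho^{\lambda_n}$.

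Having identified the two operators, I would conclude by simply invoking Theorem \ref{Spectral Synthesis}: $T_\rho$ is compact, not normal, and admits Spectral Synthesis. There is no genuine obstacle here; the only delicate point is the passage from the series representation of $f(z)$ on $\mathbb{D}^*$ to the $L^2$-convergent series for $f(\rho x)$ on $(0,1)$, and this is handled immediately by the uniform convergence on $[0,\rho]$ together with the fact that uniform convergence on $[0,1]$ implies $L^2(0,1)$ convergence.
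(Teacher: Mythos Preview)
Your proposal is correct and follows essentially the same approach as the paper: you specialize Theorem \ref{Spectral Synthesis} to $u_n=\rho^{\lambda_n}$, use the series representation $(\ref{representationf})$ to identify $T_\rho$ with the operator $(\ref{operator})$, and then invoke the theorem. The paper's own proof is just a terser version of exactly this argument.
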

\begin{proof}
From $(\ref{representationf})$ if $f\in \overline{M_{\Lambda}}$ then
$f(x)=\sum_{n=1}^{\infty}\langle f, r_{n} \rangle x^{\lambda_n}$, converging uniformly on compact subsets of the interval $[0,1)$. Then
$f(\rho x)=\sum_{n=1}^{\infty}\langle f, r_{n} \rangle\cdot  \rho^{\lambda_n}\cdot x^{\lambda_n}$ and the conclusion follows from the above theorem.
\end{proof}

Our second goal has to do with a characterization of a certain subspace of the classical Hardy space $H^2(\mathbb{D})$.
It is well known that an analytic function $f(z)=\sum_{n=0}^{\infty} a_n z^n$ in
$\mathbb{D}$ belongs to $H^2(\mathbb{D})$
if and only if $\sum_{n=0}^{\infty} |a_n|^2<\infty$. It is also known that
the Taylor coefficients $a_n$ are equal to the Fourier coefficients
\[
c_n=\frac{1}{2\pi}\int_{0}^{2\pi} f(e^{i\theta})\cdot e^{-i n\theta}\, d\theta,\qquad n=0, 1, 2, \dots
\]
of the boundary function $f(e^{i\theta})$, and that the Fourier coefficients $c_n$ vanish for negative integers $n$ (\cite[Theorem 3.4]{Duren}).
\begin{remark}
For various properties of functions in $H^p(\mathbb{D})$ spaces one may consult \cite{Duren,Jevtic,Koosis,Mashreghi}.
\end{remark}
For a sequence $\Lambda=\{\lambda_n\}_{n=1}^{\infty}$ which is a subset of the integer set $\mathbb{N}$ and satisfying $(\ref{LKcondition})$,
we let $H^2 (\mathbb{D}, \Lambda)$ consist of all the functions $f\in H^2 (\mathbb{D})$ such that their
Fourier coefficients $c_n$ vanish whenever $n\notin \Lambda$. That is,

\begin{definition}
\[
H^2(\mathbb{D}, \Lambda):=\left\{f\in H^2(\mathbb{D}):\, f(z)=\sum_{n=1}^{\infty} c_{\lambda_n} z^{\lambda_n}\right\}.
\]
\end{definition}

\begin{example}
If $\Lambda=\{n^2\}_{n=1}^{\infty}$, then $f(z)=\sum_{n=1}^{\infty} \frac{1}{n} z^{n^2}$ belongs to
$H^2 (\mathbb{D}, \Lambda)$.
\end{example}

We aim to find a condition which is both necessary and sufficient so that an analytic function
in the disk $\mathbb{D}$ belongs to the space $H^2(\mathbb{D}, \Lambda)$. This condition will be in terms of

(i) the space $\overline{M_{\Lambda}}$

and

(ii) of the unique biorthogonal family $r_{\Lambda}$ to the system $M_{\Lambda}$ in $\overline{M_{\Lambda}}$.

We prove the following.
\begin{theorem}\label{HS}
Let $\Lambda=\{\lambda_n\}_{n=1}^{\infty}\subset\mathbb{N}$ such that $\Lambda$
satisfies $(\ref{LKcondition})$.
Let $r_{\Lambda}$ be the unique biorthogonal family of $M_{\Lambda}$ in the space $\overline{M_{\Lambda}}$.
Then
\[
f\in H^2 (\mathbb{D}, \Lambda)
\]
if and only if
\begin{equation}\label{frame}
f\in\overline{M_{\Lambda}}\qquad \text{and}\qquad \sum_{n=1}^{\infty}\left| \langle f, r_n\rangle \right|^2<\infty.
\end{equation}
\end{theorem}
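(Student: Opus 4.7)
The plan is to prove the two implications separately, invoking Theorem~\ref{biorthogonalsystem}(I) for the ``if'' direction and a Hilbert-matrix estimate for the ``only if'' direction.

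\emph{Only if direction.} Suppose $f\in H^2(\mathbb{D},\Lambda)$, so $f(z)=\sum_{n=1}^{\infty}c_{\lambda_n}z^{\lambda_n}$ on $\mathbb{D}$ with $\sum|c_{\lambda_n}|^2<\infty$. First I would show that the partial sums $S_N(x)=\sum_{n=1}^{N}c_{\lambda_n}x^{\lambda_n}$ form a Cauchy sequence in $L^2(0,1)$: integrating term by term yields
\[
\|S_N-S_M\|_{L^2(0,1)}^2 = \sum_{n,m=M+1}^{N}\frac{c_{\lambda_n}\,\overline{c_{\lambda_m}}}{\lambda_n+\lambda_m+1},
\]
and since $\Lambda\subset\mathbb{N}$ gives $\lambda_n\ge n$, Hilbert's inequality (the $\ell^2$ operator norm $\pi$ of the Hilbert matrix) bounds this by $\pi\sum_{n>M}|c_{\lambda_n}|^2$, which tends to $0$. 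Hence $S_N\to g$ in $L^2(0,1)$ for some $g\in\overline{M_{\Lambda}}$. Since also $S_N\to f$ uniformly on $[0,r]$ for every $r<1$ (Taylor partial sums of an $H^2$ function), the pointwise limit on $(0,1)$ is $f$, so $f|_{(0,1)}=g\in\overline{M_{\Lambda}}$. Continuity of the inner product and biorthogonality then give $\langle f,r_m\rangle=\lim_{N}\langle S_N,r_m\rangle=c_{\lambda_m}$, so $\sum|\langle f,r_n\rangle|^2=\sum|c_{\lambda_n}|^2<\infty$.

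\emph{If direction.} Suppose $f\in\overline{M_{\Lambda}}$ with $\sum|\langle f,r_n\rangle|^2<\infty$. Theorem~\ref{biorthogonalsystem}(I) already supplies the representation $f(z)=\sum_{n=1}^{\infty}\langle f,r_n\rangle z^{\lambda_n}$ on $\mathbb{D}^*$. Because $\Lambda\subset\mathbb{N}$, each $z^{\lambda_n}$ is single-valued and analytic on all of $\mathbb{D}$, and by Cauchy--Schwarz
\[
\sum_{n=1}^{\infty}|\langle f,r_n\rangle|\,|z|^{\lambda_n}\le\Bigl(\sum_{n=1}^{\infty}|\langle f,r_n\rangle|^2\Bigr)^{1/2}\Bigl(\sum_{n=1}^{\infty}|z|^{2\lambda_n}\Bigr)^{1/2}
\]
converges uniformly on compact subsets of $\mathbb{D}$. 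Thus $f$ extends analytically to the full disk with Taylor coefficients $a_k=\langle f,r_n\rangle$ when $k=\lambda_n\in\Lambda$ and $a_k=0$ otherwise. The square-summability hypothesis then forces $\sum_k|a_k|^2<\infty$, so $f\in H^2(\mathbb{D})$, and since the nonzero Fourier coefficients are supported on $\Lambda$, $f\in H^2(\mathbb{D},\Lambda)$.

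\emph{Main obstacle.} The only non-routine step is the $L^2(0,1)$ convergence of the Taylor partial sums in the ``only if'' direction: the monomials $x^{\lambda_n}$ are not orthogonal in $L^2(0,1)$, so a diagonal estimate fails and the off-diagonal terms $1/(\lambda_n+\lambda_m+1)$ must be tamed. Invoking the boundedness of the Hilbert matrix on $\ell^2$ is the clean way around this; once that estimate is in place, everything else is either an appeal to Theorem~\ref{biorthogonalsystem}(I), a uniform-convergence argument via Cauchy--Schwarz, or definition-chasing.
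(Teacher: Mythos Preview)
Your argument is correct. The ``if'' direction is essentially the paper's: both invoke Theorem~\ref{biorthogonalsystem}(I) to obtain the series representation $f(z)=\sum\langle f,r_n\rangle z^{\lambda_n}$ and then read off membership in $H^2(\mathbb{D},\Lambda)$ from the square-summability of the coefficients.

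The ``only if'' direction is where you diverge from the paper. The paper does not prove $L^2$ convergence of the partial sums directly. Instead it first shows $f\in L^2(0,1)$ by a crude Cauchy--Schwarz estimate (Lemma~\ref{L2}, using $\sum 1/\lambda_n<\infty$), then invokes a separate Fatou-type lemma (Lemma~\ref{converse}) to conclude that any $L^2$ function with a pointwise M\"untz expansion lies in $\overline{M_\Lambda}$; finally it applies the representation~(\ref{representationf}) and uniqueness of power-series coefficients to identify $c_{\lambda_n}=\langle f,r_n\rangle$. Your route via the Hilbert matrix inequality is more direct: it yields $f\in\overline{M_\Lambda}$ in one stroke (as a norm limit of M\"untz polynomials) and delivers the coefficient identification by continuity of the inner product, bypassing both auxiliary lemmas. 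The price is that the Hilbert-matrix bound uses $\lambda_n\ge n$, which is available here because $\Lambda\subset\mathbb{N}$; the paper's Lemma~\ref{converse}, by contrast, is formulated for arbitrary real exponents satisfying~(\ref{LKcondition}) and is reused elsewhere in the paper (in the proof of Theorem~\ref{biorthogonalsystem} parts (I) and (IV)).
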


\section{Auxiliary lemmas}
\setcounter{equation}{0}

\begin{lemma}\label{L2}
Let $\Lambda=\{\lambda_n\}_{n=1}^{\infty}\subset\mathbb{N}$ such that $\Lambda$
satisfies $(\ref{LKcondition})$.
If $f\in H^2 (D, \Lambda)$ then for every fixed $\theta\in [0,2\pi]$, the function $f(t e^{i\theta})$ belongs
to the space $L^2 (0, 1)$, that is,
\[
\int_{0}^{1} |f(te^{i\theta}|^2\, dt<\infty.
\]
\end{lemma}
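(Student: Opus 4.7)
The plan is to exploit the Taylor/Fourier expansion $f(z) = \sum_{n=1}^{\infty} c_{\lambda_n} z^{\lambda_n}$ of a function $f \in H^2(\mathbb{D},\Lambda)$, which converges uniformly on compact subsets of $\mathbb{D}$, and for which $\sum_{n=1}^{\infty}|c_{\lambda_n}|^2<\infty$ by Parseval's identity. For fixed $\theta$, set $a_n := c_{\lambda_n} e^{i\lambda_n \theta}$, so that $|a_n| = |c_{\lambda_n}|$ and hence $\sum |a_n|^2 < \infty$. Then $f(te^{i\theta}) = \sum_{n=1}^{\infty} a_n t^{\lambda_n}$, with convergence uniform for $t$ in compact subsets of $[0,1)$.

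Next I would work with the partial sums $S_N(t):=\sum_{n=1}^{N} a_n t^{\lambda_n}$, which obviously belong to $L^2(0,1)$. A direct computation gives
\[
\|S_N\|^2_{L^2(0,1)} = \sum_{n,m=1}^{N} a_n\overline{a_m}\,\frac{1}{\lambda_n+\lambda_m+1},
\]
so that by the triangle inequality and AM-GM ($\lambda_n+\lambda_m \geq 2\sqrt{\lambda_n\lambda_m}$),
\[
\|S_N\|^2_{L^2(0,1)} \leq \sum_{n,m=1}^{N} \frac{|a_n||a_m|}{\lambda_n+\lambda_m} \leq \frac{1}{2}\left(\sum_{n=1}^{N} \frac{|a_n|}{\sqrt{\lambda_n}}\right)^2.
\]
A final application of Cauchy-Schwarz yields
\[
\|S_N\|^2_{L^2(0,1)} \leq \frac{1}{2}\Bigl(\sum_{n=1}^{\infty} |a_n|^2\Bigr)\Bigl(\sum_{n=1}^{\infty} \frac{1}{\lambda_n}\Bigr),
\]
where both factors on the right are finite: the first by $f\in H^2(\mathbb{D})$ and the second by the hypothesis $(\ref{LKcondition})$. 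Hence the bound is uniform in $N$.

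To finish, since $S_N(t) \to f(te^{i\theta})$ pointwise for every $t \in [0,1)$, Fatou's lemma gives
\[
\int_0^1 |f(te^{i\theta})|^2\, dt \leq \liminf_{N\to\infty} \|S_N\|_{L^2(0,1)}^2 < \infty,
\]
as required. The only subtle step is the transition from partial sums to the full series, and Fatou's lemma handles it cleanly; the rest is essentially the observation that the Gram matrix $\{1/(\lambda_n+\lambda_m+1)\}$ is tamed by the Müntz-type condition $\sum 1/\lambda_n<\infty$ via AM-GM, which is the main technical point.
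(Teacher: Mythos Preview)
Your proof is correct. Both your argument and the paper's rest on the same idea: combine $\sum_n |c_{\lambda_n}|^2<\infty$ (from $f\in H^2$) with $\sum_n 1/\lambda_n<\infty$ (from condition~(\ref{LKcondition})) via Cauchy--Schwarz to get a uniform $L^2$ bound on truncations, then pass to the limit.

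The technical route differs slightly. The paper applies Cauchy--Schwarz \emph{pointwise} before integrating,
\[
\Bigl(\sum_n |c_{\lambda_n}|\,t^{\lambda_n}\Bigr)^2 \le \Bigl(\sum_n |c_{\lambda_n}|^2\Bigr)\Bigl(\sum_n t^{2\lambda_n}\Bigr),
\]
and then integrates the right-hand side over $[0,\rho]$ to obtain $\bigl(\sum |c_{\lambda_n}|^2\bigr)\bigl(\sum \frac{1}{2\lambda_n+1}\bigr)$ directly, finally letting $\rho\to 1^-$. You instead integrate first to produce the Gram matrix $\{(\lambda_n+\lambda_m+1)^{-1}\}$, factor it with AM--GM, and only then apply Cauchy--Schwarz; the limit is taken in $N$ via Fatou. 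The paper's path is one step shorter (no AM--GM needed), while your Gram-matrix viewpoint makes explicit how the M\"untz condition controls the off-diagonal interactions. Both reach the same final estimate up to a harmless constant.
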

\begin{proof}

Since $\sum_{n=1}^{\infty}\lambda_n^{-1}<\infty$, then $n/\lambda_n\to 0$ as $n\to\infty$, and hence $\sum_{n=1}^{\infty} t^{2\lambda_n}<\infty$ for any $0<t<1$. Since $f\in H^2 (D, \Lambda)$ then $f(z)=\sum_{n=1}^{\infty} c_{\lambda_n} z^{\lambda_n}$ thus
$\sum_{n=1}^{\infty} |c_{\lambda_n}|^2<\infty$. By the Cauchy-Schwarz inequality we then have
\[
\sum_{n=1}^{\infty} \left|c_{\lambda_n}\cdot t^{\lambda_n}\right|^2\le
\left(\sum_{n=1}^{\infty} |c_{\lambda_n}|^2\right) \cdot \left(\sum_{n=1}^{\infty} t^{2\lambda_n}\right).
\]

Therefore, there is some $M>0$ so that for all $0<\rho<1$, we have

\begin{eqnarray*}
\int_{0}^{\rho} |f(te^{i\theta}|^2\, dt=
\int_{0}^{\rho} \left|\sum_{n=1}^{\infty} c_{\lambda_n} t^{\lambda_n} e^{i\theta \lambda_n}\right|^2\, dt & \le &
\int_{0}^{\rho} \left(\sum_{n=1}^{\infty} |c_{\lambda_n}|\cdot  t^{\lambda_n}\right)^2\, dt\\ & \le &
\left(\sum_{n=1}^{\infty} |c_{\lambda_n}|^2\right)\cdot
\int_{0}^{\rho} \sum_{n=1}^{\infty} t^{2\lambda_n}\, dt\\ & = &
\left(\sum_{n=1}^{\infty} |c_{\lambda_n}|^2\right)\cdot  \left(\sum_{n=1}^{\infty}  \int_{0}^{\rho} t^{2\lambda_n}\, dt\right)\\ & \le &
\left(\sum_{n=1}^{\infty} |c_{\lambda_n}|^2\right)\cdot  \left(\sum_{n=1}^{\infty}  \frac{1}{2\lambda_n +1}\right)\\ & \le & M.
\end{eqnarray*}

This uniform upper bound for $\int_{0}^{\rho} |f(te^{i\theta}|^2\, dt$, $0<\rho<1$, means that
\[
\int_{0}^{1} |f(te^{i\theta}|^2\, dt\le M.
\]
\end{proof}

The following result is known \cite[Corollary 6.2.4]{Gurariy} however for the sake of completeness we present below a proof.
\begin{lemma}\label{converse}
Let $\Lambda=\{\lambda_n\}_{n=1}^{\infty}$ be a strictly increasing sequence of positive real numbers
that satisfies $(\ref{LKcondition})$.
Suppose that $f\in L^2 (0,1)$ and $f(t)=\sum_{n=1}^{\infty} c_n t^{\lambda_n}$ for $t\in [0,1)$ with the series converging uniformly on compact
subsets of $[0, 1)$. Then $f$ belongs to the space $\overline{M_{\Lambda}}$.
\end{lemma}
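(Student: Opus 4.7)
The plan is to show that $f$ lies in $\overline{M_\Lambda}$ by approximating it in the $L^2(0,1)$-norm by the dilations $f_\rho(t) := f(\rho t)$ for $\rho \in (0,1)$, each of which will be shown to lie in $\overline{M_\Lambda}$. Since $\overline{M_\Lambda}$ is closed in $L^2(0,1)$, the lemma follows once we establish
\begin{enumerate}
\item[(a)] $f_\rho \in \overline{M_\Lambda}$ for every $\rho \in (0,1)$, and
\item[(b)] $\|f_\rho - f\|_{L^2(0,1)} \to 0$ as $\rho \to 1^-$.
\end{enumerate}

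For step (a), I would exploit the hypothesis that $\sum_{n=1}^\infty c_n s^{\lambda_n}$ converges uniformly on $[0,\rho]$. Substituting $s = \rho t$ for $t \in [0,1]$ turns this into the statement that
\[
\sum_{n=1}^{N} c_n \rho^{\lambda_n} t^{\lambda_n} \longrightarrow f(\rho t) \qquad \text{uniformly on } [0,1].
\]
Uniform convergence on $[0,1]$ implies $L^2(0,1)$ convergence, and since each partial sum is a finite linear combination of elements of $M_\Lambda$, the limit $f_\rho$ belongs to $\overline{M_\Lambda}$.

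For step (b), I would use a standard density argument: approximate $f$ in $L^2(0,1)$ by a continuous function $g$ on $[0,1]$ with $\|f - g\|_{L^2} < \epsilon$, extending $f$ and $g$ by zero outside $[0,1]$ if convenient. The change of variable $u = \rho t$ yields
\[
\|f_\rho - g_\rho\|_{L^2(0,1)}^2 \;=\; \frac{1}{\rho}\int_0^\rho |f(u) - g(u)|^2 \, du \;\le\; \frac{1}{\rho}\,\|f - g\|_{L^2(0,1)}^2,
\]
so $\|f_\rho - g_\rho\|_{L^2} \le \rho^{-1/2}\epsilon$. Meanwhile $\|g_\rho - g\|_{L^2(0,1)} \to 0$ as $\rho \to 1^-$ by the uniform continuity of $g$ on $[0,1]$. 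Combining via the triangle inequality and letting $\epsilon \to 0$ gives (b).

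The main potential obstacle is (b), since uniform convergence on compact subsets of $[0,1)$ together with $f \in L^2(0,1)$ does not directly yield $L^2$-convergence of the partial sums themselves (the behavior near $t=1$ is uncontrolled). The dilation trick bypasses this entirely: it reduces the problem to uniform approximation on a strictly smaller interval, and the continuity of the dilation $\rho \mapsto f(\rho \cdot)$ in $L^2$, which is a textbook fact, supplies the remaining step.
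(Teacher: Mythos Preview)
Your proof is correct and follows the same strategy as the paper: both show that the dilations $f_\rho(t)=f(\rho t)$ lie in $\overline{M_\Lambda}$ via uniform convergence of the partial sums on $[0,1]$, and then that $f_\rho\to f$ in $L^2(0,1)$ as $\rho\to 1^-$. The only difference is in the verification of the latter: the paper uses Fatou's lemma and a change of variables to obtain $\|f_\rho\|_{L^2}\to\|f\|_{L^2}$ and infers strong convergence from that (implicitly via pointwise convergence plus a Riesz-type argument), whereas you use the standard density-of-continuous-functions argument, which is slightly more self-contained.
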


\begin{proof}

By applying Fatou's Lemma and changing variables we get

\begin{eqnarray*}
\int_{0}^{1}|f(t)|^2\,dt\le \liminf_{\rho\to 1^-}\int_{0}^{1}|f(\rho t)|^2\,dt\le \limsup_{\rho\to 1^-}\int_{0}^{1}|f(\rho t)|^2\,dt & = &
\limsup_{\rho\to 1^-}\int_{0}^{\rho}|f(u)|^2\, \frac{du}{\rho}\\ & \le &
\limsup_{\rho\to 1^-}\frac{1}{\rho}\cdot \int_{0}^{1}|f(u)|^2\, du \\ & = &
\int_{0}^{1}|f(t)|^2\,dt.
\end{eqnarray*}

This means that $\lim_{\rho\to 1^-}\int_{0}^{1}|f(\rho t)|^2\,dt$ exists and

\[
\lim_{\rho\to 1^-}\int_{0}^{1}|f(\rho t)|^2\, dt = \int_{0}^{1}|f(t)|^2\, dt.
\]
Hence we have
\[
\lim_{\rho\to 1^-}\int_{0}^{1}|f(\rho t)-f(t)|^2\, dt = 0.
\]
Therefore, for every $\epsilon>0$ there is some $0<\delta_{\epsilon}<1$ so that for all $\rho\in (\delta_{\epsilon}, 1)$ one has
\begin{equation}\label{epsilon}
\int_{0}^{1}|f(\rho t)-f(t)|^2\, dt < \epsilon.
\end{equation}

Fix now such an $\epsilon$, its $\delta_{\epsilon}$, as well as some $\rho \in (\delta_{\epsilon}, 1)$. Then the series

\[
f(\rho t) = \sum_{n=1}^{\infty} c_{n} \cdot \rho^{\lambda_n}\cdot t^{\lambda_n}
\]
converges uniformly on the interval $[0,1]$.

Thus there is some positive integer $N$, which depends on $\epsilon$ and $\rho$, so that
\[
\int_{0}^{1}\left| f(\rho t) - \sum_{n=1}^{N} c_{n} \cdot \rho^{\lambda_n}\cdot t^{\lambda_n}\right|^2\, dt<\epsilon.
\]
Combined with $(\ref{epsilon})$ and applying the Minkowski inequality shows that
\[
\int_{0}^{1}\left| f(t) - \sum_{n=1}^{N} c_{n} \cdot \rho^{\lambda_n}\cdot t^{\lambda_n}\right|^2\, dt<2\epsilon.
\]
This last inequality means that $f$ belongs to the space $\overline{M_{\Lambda}}$.
\end{proof}

\begin{lemma}\label{Bio}
Let $\Lambda=\{\lambda_n\}_{n=1}^{\infty}$ be a strictly increasing sequence of positive real numbers
that satisfies $(\ref{LKcondition})$. Then there exists a unique family of functions
\[
r_{\Lambda}=\{r_{n}:\,\, n\in\mathbb{N}\}\subset \overline{M_{\Lambda}}
\]
so that it is biorthogonal to $M_{\Lambda}$, and such that
for every $\epsilon>0$ there is a constant $m_{\epsilon}>0$, independent of $n\in\mathbb{N}$,
but depending on $\Lambda$, so that
\begin{equation}\label{rnkbound}
||r_{n}|| \le  m_{\epsilon}(1+\epsilon)^{\lambda_n},\qquad \forall\,\, n\in\mathbb{N}.
\end{equation}
\end{lemma}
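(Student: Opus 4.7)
My plan is to construct $r_n$ by orthogonal projection, reduce the norm bound $(\ref{rnkbound})$ to an explicit Cauchy-determinant identity, and handle the resulting infinite product by a three-regime decomposition controlled by the density and gap hypotheses.

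\emph{Construction and uniqueness.} Since $M_\Lambda$ is minimal in the Hilbert space $\overline{M_\Lambda}$, for each $n$ the function $e_n = t^{\lambda_n}$ lies a positive distance $d_n := \operatorname{dist}(e_n, V_n) > 0$ from $V_n := \overline{\operatorname{span}}\{e_k : k \ne n\}$. Put $q_n := e_n - P_{V_n}e_n$ and $r_n := q_n/d_n^2$. Because $q_n \perp V_n$ we have $\langle e_k, r_n\rangle = 0$ for $k \ne n$, while $\langle e_n, r_n\rangle = \|q_n\|^2/d_n^2 = 1$, so $r_n \in \overline{M_\Lambda}$ is biorthogonal to $M_\Lambda$. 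Uniqueness is immediate from completeness: two biorthogonal elements in $\overline{M_\Lambda}$ differ by a function orthogonal to every $e_k$, hence to the closed span, hence zero. Since $\|r_n\| = 1/d_n$, the bound $(\ref{rnkbound})$ is equivalent to the lower estimate $d_n \ge m_\epsilon^{-1}(1+\epsilon)^{-\lambda_n}$.

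\emph{Reduction to an infinite product.} I would approximate $V_n$ by $V_n^{(N)} := \operatorname{span}\{e_k : k \le N,\, k \ne n\}$ and compute $d_n^{(N)} := \operatorname{dist}(e_n, V_n^{(N)})$ as a ratio of Gram determinants. The Gramian $[\langle e_i, e_j\rangle] = [1/(\lambda_i+\lambda_j+1)]$ is a symmetric Cauchy matrix, whose determinant factorizes, and after cancellation
\[
(d_n^{(N)})^2 \;=\; \frac{1}{2\lambda_n+1}\prod_{\substack{k \le N\\ k \ne n}}\left(\frac{\lambda_k-\lambda_n}{\lambda_k+\lambda_n+1}\right)^{\!2}.
\]
Monotone convergence as $N \to \infty$ leaves the same identity with the infinite product. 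It thus suffices to prove that for every $\epsilon > 0$,
\[
\Sigma_n \;:=\; \sum_{k \ne n}\log\frac{\lambda_k+\lambda_n+1}{|\lambda_k-\lambda_n|} \;\le\; \lambda_n\log(1+\epsilon) + C_\epsilon.
\]

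\emph{The main obstacle: estimating $\Sigma_n$.} Given $\epsilon > 0$, I fix a large threshold $A = A(\epsilon) > 1$ and split the sum. For $\lambda_k > A\lambda_n$ each summand is bounded by $C\lambda_n/\lambda_k$, so the partial sum is dominated by $C\lambda_n \sum_{\lambda_k > A\lambda_n}1/\lambda_k$; tail convergence of $\sum 1/\lambda_k$ makes this $\le (\epsilon/3)\lambda_n$ once $\lambda_n$ is large enough. For $\lambda_k < \lambda_n/A$ each summand is $O_\epsilon(1)$, and the Tauberian consequence $n(r)/r \to 0$ of $\sum 1/\lambda_k < \infty$ (proved via $(n(r)-n(r/2))/r \le \sum_{\lambda_k > r/2}1/\lambda_k$) keeps the count at $o(\lambda_n)$, so this contribution is again $\le (\epsilon/3)\lambda_n$. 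For the medium range $\lambda_n/A \le \lambda_k \le A\lambda_n$, $k \ne n$, the gap $\delta := \inf(\lambda_{n+1}-\lambda_n) > 0$ forces $|\lambda_k - \lambda_n| \ge \delta|k-n|$, so the contribution is at most
\[
2\sum_{j=1}^{J}\log\frac{2(A+1)\lambda_n}{j\delta} \;=\; 2J\log(\lambda_n/J) + O_\epsilon(J),
\]
where $J := n(A\lambda_n) - n(\lambda_n/A)$ counts the medium indices. Density zero yields $J/\lambda_n \to 0$, and since $f(x) := x\log(1/x)$ tends to $0$ as $x \to 0^+$, this last quantity equals $\lambda_n\, f(J/\lambda_n) + o(\lambda_n) = o(\lambda_n)$ and so is $\le (\epsilon/3)\lambda_n$ for $n$ large. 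Adding the three bounds and absorbing the finitely many small $n$ into $C_\epsilon$ yields the claimed estimate. The main technical difficulty is regime (iii): the spectral gap and the density-zero consequence of $\sum 1/\lambda_k<\infty$ must be combined delicately so that the near-diagonal contribution has a coefficient of $\lambda_n$ that can be made arbitrarily small.
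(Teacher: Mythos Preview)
Your construction and uniqueness argument coincide with the paper's. For the norm bound, however, the paper takes a much shorter route: it simply observes that the distance $D_n$ in $L^2(0,1)$ dominates the corresponding distance $D_{n,a}$ in $L^2(a,1)$ for any $0<a<1$, and then quotes the Luxemburg--Korevaar estimate \cite[relation (1.9)]{LK}, which already gives $D_{n,a}\ge m_\epsilon(1-\epsilon)^{\lambda_n}$. The analytic work is entirely outsourced to that reference.

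Your approach is genuinely different and self-contained: you compute $d_n$ exactly via the Cauchy-determinant identity for the Gram matrix $[1/(\lambda_i+\lambda_j+1)]$, and then control the resulting product by splitting the index set into far, near, and medium regimes. The argument is correct---the far tail is handled by $\sum 1/\lambda_k<\infty$, the near regime by the counting estimate $n(r)=o(r)$ (itself a consequence of summability), and the medium regime by combining the gap $\delta>0$ with $J/\lambda_n\to 0$ through the $x\log(1/x)\to 0$ observation. What you gain is transparency: one sees precisely where each half of hypothesis $(\ref{LKcondition})$ enters, and in effect you have reproved the Luxemburg--Korevaar bound in the $L^2(0,1)$ setting. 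What the paper gains is brevity, at the cost of a black-box citation. (Minor remark: the choice $A=A(\epsilon)$ is harmless but unnecessary---any fixed $A>1$ already makes each regime $o(\lambda_n)$.)
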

\begin{proof}
Recall that $e_{n}(t)=t^{\lambda_n}$ and let $M_{\Lambda_{n}}:=M_{\Lambda}\setminus e_{n}$,
that is $M_{\Lambda_{n}}$ is the system $M_{\Lambda}$ excluding the function $t^{\lambda_n}$.
Denote by $D_{n}$ the $distance$ between $e_{n}$
and the closed span of $M_{\Lambda_{n}}$ in $L^2(0,1)$, that is
\[
D_{n}:=\inf_{g\in \overline{\text{span}} (M_{\Lambda_{n}})} ||e_{n}-g||_{L^2 (0,1)}.
\]
Consider also the space $L^2 (a,1)$ for some $0<a<1$ and denote by $D_{n,a}$ the distance between $e_{n}$
and the closed span of $M_{\Lambda_{n}}$ in $L^2(a,1)$.
It was shown in \cite[relation (1.9)]{LK} that for every $\epsilon>0$, there is a positive constant $m_{\epsilon}$
which depends on $a$ and on $\Lambda$ but not on $n\in\mathbb{N}$, so that
$D_{n,a}\ge m_{\epsilon}\cdot (1-\epsilon)^{\lambda_n}$. If a function $g$ belongs to the closed span of $M_{\Lambda_{n}}$ in $L^2(0,1)$,
then clearly $g$ belongs to the closed span of $M_{\Lambda_{n}}$ in $L^2(a,1)$ and $||e_{n}-g||_{L^2 (0,1)}\ge ||e_{n}-g||_{L^2 (a,1)}$.
Thus $D_n\ge D_{n,a}$, hence we conclude that
\begin{equation}\label{distanceresult}
D_n\ge m_{\epsilon} (1-\epsilon)^{\lambda_n}.
\end{equation}
Now, from the $Closed\,\, Point\,\, Theorem$ in Hilbert spaces,
and since $L^2(0,1)$ is such a space, endowed with the inner product
$\langle f,g \rangle:=\int_{0}^{1} f(t)\overline{g(t)}\, dt$, it  follows that there exists a unique element
in $\overline{\text{span}}(M_{\Lambda_{n}})$ in $L^2(0,1)$, that we denote by $\varphi_{n}$,
so that
\[
||e_{n}-\varphi_{n}||_{L^2 (0,1)}=
\inf_{g\in \overline{\text{span}}(M_{\Lambda_{n}})}||e_{n}-g||_{L^2 (0,1)}=D_{n}.
\]
The function $e_{n}-\varphi_{n}$ is orthogonal to all the elements of the closed span of $M_{\Lambda_{n}}$
in $L^2 (0,1)$, thus to $\varphi_{n}$ as well. Hence
\[
\langle e_{n}-\varphi_{n}, e_{n}-\varphi_{n} \rangle=\langle e_{n}-\varphi_{n}, e_{n}\rangle.
\]
Clearly one has
\[
(D_{n})^2=\langle e_{n}-\varphi_{n}, e_{n}\rangle.
\]
Next, let
\[
r_{n}(t):=\frac{e_{n}(t)-\varphi_{n}(t)}{(D_{n}) ^2}.
\]
One now has $\langle r_{n}, e_{n}\rangle=1$ and $\langle r_{n}, e_{m}\rangle=0$ for all $m\in\mathbb{N}\setminus n$.
We just proved that $\{r_{n}:\,\, n\in\mathbb{N}\}$ is biorthogonal  to the system $M_{\Lambda}$. Moreover,
since $\varphi_{n}\in\overline{\text{span}}(M_{\Lambda_{n}})$ in $L^2 (0,1)$ then
$r_{n}\in\overline{M_{\Lambda}}$. The completeness of $M_{\Lambda}$ in $\overline{M_{\Lambda}}$ implies that $r_{\Lambda}$ is the only
biorthogonal family to $M_{\Lambda}$ in the space $\overline{M_{\Lambda}}$.
Finally, $(\ref{rnkbound})$ follows from the definition of $r_n$ and relation $(\ref{distanceresult})$.
\end{proof}

\section{Proof of Theorems $\ref{biorthogonalsystem}$ and $\ref{Spectral Synthesis}$}
\setcounter{equation}{0}

\subsection{Proof of Theorem $\ref{biorthogonalsystem}$}

($\bf Proof$ of I)

From the ``Clarkson-Erd\H{o}s-Schwartz Phenomenon''
we know that any $f\in\overline{M_{\Lambda}}$ extends analytically in the slit disk $\mathbb{D^*}=D\setminus (-1,0)$ and admits the series representation $f(z)=\sum_{n=1}^{\infty} c_n z^{\lambda_n}$ converging uniformly on compact subsets of $\mathbb{D^*}$.
We will show below that
$c_{n}=\langle f, r_{n} \rangle$ for all $n\in\mathbb{N}$.

Fix any $n\in\mathbb{N}$ and let
\[
f_n (t) := f(t) - c_n t^{\lambda_n}.
\]

We claim that
\begin{equation}\label{zerozero1}
\int_{0}^{1} \overline{r_{n}(t)}\cdot f_n (t) \, dt=0.
\end{equation}

Indeed, since $f\in L^2(0,1)$ then $f_n\in L^2(0,1)$ as well. But
$f_n(t)=\sum_{k\not= n} c_k t^{\lambda_k}$, thus it follows from Lemma $\ref{converse}$ that $f_n$ belongs
to the closed span of the system
\[
M_{\Lambda_n}=M_{\Lambda}\setminus e_n.
\]
in $L^2(0,1)$. Hence, for every $\epsilon>0$, there is a function $f_{n,\epsilon}$ in the span of $M_{\Lambda_n}$
so that $||f_n-f_{n,\epsilon}||_{L^2(0,1)}<\epsilon$. Due to the biorthogonality we have
\[
\int_{0}^{1} \overline{r_{n}(t)}\cdot f_{n,\epsilon}(t)\, dt=0.
\]
Combining this with the Cauchy-Schwarz inequality we get
\begin{eqnarray*}
\left|\int_{0}^{1} \overline{r_{n}(t)}\cdot f_n(t)\, dt\right| & = &
\left|\int_{0}^{1} \overline{r_{n}(t)}\cdot (f_n (t)-f_{n, \epsilon}(t))\, dt\right|\\
& \le & \epsilon\cdot ||r_{n}||_{L^2(0,1)}.
\end{eqnarray*}
The arbitrary choice of $\epsilon$ implies that $(\ref{zerozero1})$ is true.

Since $\langle e_n, r_n\rangle =1$ we get
\begin{eqnarray*}
\langle f, r_{n} \rangle & = & \int_{0}^{1}\overline{r_{n}(t)} \cdot  c_n t^{\lambda_n}\, dt  +
\int_{0}^{1}\overline{r_{n}(t)} \cdot f_n (t)\, dt \\
& = & c_n.
\end{eqnarray*}

($\bf Proof$ of II)

Clearly $f(x)$ can be written uniquely as
\[
f(x)=g(x)+h(x)
\]
where $g$ belongs to the space $\overline{M_{\Lambda}}$ and $h$ belongs to the orthogonal
complement of $\overline{M_{\Lambda}}$ in $L^2 (0, 1)$. Thus $\langle h, r_n\rangle=0$ for all $r_n\in r_{\Lambda}$
and since $h=f-g$, we then have
\[
\langle f, r_n\rangle=\langle g, r_n\rangle\qquad \text{for all}\quad n\in\mathbb{N}.
\]
Since $g\in \overline{M_{\Lambda}}$ then from $(\ref{representationf})$ we have
\[
g(x)=\sum_{n=1}^{\infty}\langle g, r_n \rangle \cdot x^{\lambda_n}\qquad \text{for all}\quad x\in [0,1).
\]
Combining the above shows that
\[
\sum_{n=1}^{\infty}\langle f, r_n \rangle\cdot  x^{\lambda_n}\qquad
\text{belongs to the space $\overline{M_{\Lambda}}$}.
\]

($\bf Proof$ of III)

Denote by $S_{\Lambda}$ the closed span of the family $r_{\Lambda}$ in $L^2 (0,1)$ hence $S_{\Lambda}$ is a subspace
of $\overline{M_{\Lambda}}$. Let $S_{\Lambda}^{\perp}$ be the orthogonal complement of $S_{\Lambda}$ in $\overline{M_{\Lambda}}$, that is,

\[
S_{\Lambda}^{\perp}=\{f\in \overline{M_{\Lambda}}:\,\, \langle f, g \rangle =
0\quad \text{for\,\, all}\,\, g\in S_{\Lambda}\}.
\]
If $f\in S_{\Lambda}^{\perp}$ then $\langle f, r_n \rangle = 0$ for all $r_n\in r_{\Lambda}$
and $f\in \overline{M_{\Lambda}}$. But any $f\in \overline{M_{\Lambda}}$ admits the representation $(\ref{representationf})$ thus
$f$ is the zero function. Hence $S_{\Lambda}=\overline{M_{\Lambda}}$.

($\bf Proof$ of IV)

We write the set $\mathbb{N}$ as an arbitrary disjoint union of two sets $N_1$ and $N_2$ and
we must show that the closed span of the mixed system
\[
M_{\Lambda_{1,2}}:=\{e_{n}:\,\, n\in N_1\}\cup\{r_{n}:\,\, n\in N_2\},
\]
in $L^2 (0, 1)$ is equal to $\overline{M_{\Lambda}}$.

Let $\overline{M_{\Lambda_{1,2}}}$ be the closed span of $M_{\Lambda_{1,2}}$ in $L^2 (0, 1)$ and
let $\overline{M_{\Lambda_{1,2}}}^{\perp}$ be the orthogonal complement of $\overline{M_{\Lambda_{1,2}}}$
in $\overline{M_{\Lambda}}$, that is
\[
\overline{M_{\Lambda_{1,2}}}^{\perp}=\{f\in \overline{M_{\Lambda}}:\,\, \langle f, g \rangle =
0\quad \text{for\,\, all}\,\, g\in \overline{M_{\Lambda_{1,2}}}\}.
\]
It follows from $(\ref{representationf})$ that if $f\in \overline{M_{\Lambda_{1,2}}}^{\perp}\subset M_{\Lambda}$,
hence $f\in\overline{M_{\Lambda}}$ also, then
\[
f(t)=\sum_{n=1}^{\infty} \langle f, r_{n} \rangle \cdot t^{\lambda_n},\quad \text{on}\quad (0,1).
\]
and $\langle f, r_{n} \rangle =0$ for all $n\in N_2$.
Thus,
\[
f(t)=\sum_{n\in N_1}\langle f, r_{n} \rangle \cdot t^{\lambda_n},
\quad \text{on}\quad (0,1).
\]
It then follows from Lemma $\ref{converse}$ that $f$ belongs to the closed span of the system
\[
M_{\Lambda, N_1}:=\{e_{n}:\,\, n\in N_1\}
\]
in $L^2 (0, 1)$. So, for every $\epsilon>0$ there is a function $g_{\epsilon}$ in $\text{span}(M_{\Lambda, N_1})$ so that
$||f-g_{\epsilon}||_{L^2(0,1)}<\epsilon$.
Since $f\in \overline{M_{\Lambda_{1,2}}}^{\perp}$ then $\langle f, e_{n} \rangle =0$ for all $n\in N_1$, thus
$\langle f, g_{\epsilon} \rangle=0$. Therefore,
\[
||f||^2_{L^2(0,1)}=\langle f, f \rangle = \langle f, f-g_{\epsilon}\rangle \le ||f||_{L^2(0,1)}\cdot ||f-g_{\epsilon}||_{L^2(0,1)}
\le ||f||_{L^2(0,1)}\cdot \epsilon.
\]
Hence
\[
||f||_{L^2(0,1)}\le \epsilon.
\]
Clearly this means that $f$ is the zero function hence $\overline{M_{\Lambda_{1,2}}}^{\perp}=\{\mathbf{0}\}$.
Thus $\overline{M_{\Lambda_{1,2}}}=\overline{M_{\Lambda}}$, concluding that $M_{\Lambda}$
is a strong Markushevich basis for $\overline{M_{\Lambda}}$.

\subsection{Proof of Theorem $\ref{Spectral Synthesis}$}

The proof follows along the lines of the results in \cite[Section 4]{Zikkos2024JMAA}.

We will show below that the operator $T$ in $(\ref{operator})$ has properties
as those stated in Theorem $A$. Combined with the fact that the system
$M_{\Lambda}$ is hereditarily complete in $\overline{M_{\Lambda}}$ shows that Theorem $\ref{Spectral Synthesis}$ is true.

We will prove that
\begin{enumerate}
\item The operator $T$ is compact.
\item $\{ u_k\}_{k=1}^{\infty}$ are eigenvalues  of $T$ and $\{ e_k\}_{k=1}^{\infty}$ are the corresponding eigenvectors.
\item $\{\overline{u_k}\}$ are eigenvalues  of $T^*$ and $\{ r_k\}_{k=1}^{\infty}$ are the corresponding eigenvectors.
\item The kernel of $T$ is trivial.
\item The spectrum of $T$ is $\displaystyle{\{ 0\} \cup \{ u_k\}_{k=1}^{\infty}}$.
\item Each eigenvalue of $T$ is simple.
\item The operator $T$ is not normal.
\end{enumerate}

1. First we prove that $T$ is a bounded operator on $\overline{M_{\Lambda}}$.
Let $f$ be a function in the space $\overline{M_{\Lambda}}$. Then from Theorem $\ref{biorthogonalsystem}$ we have
\[
f(z)=\sum_{n=1}^{\infty} \langle f , r_n\rangle\cdot z^{\lambda_n},
\]
with the series converging uniformly on compact subsets of the slit disk $\mathbb{D^*}$.
From relation $(\ref{rnkbound})$, for every $\epsilon>0$ there exists some $m_{\epsilon}>0$, independent of $n\in\mathbb{N}$, so that
\begin{equation}\label{rnkbounda}
|\langle f , r_n\rangle|\le ||f||_{L^2 (0,1)}\cdot m_{\epsilon}(1+\epsilon)^{\lambda_n}.
\end{equation}
Consider now the sequence $\{u_n\}_{n=1}^{\infty}$ as in $(\ref{un})$,
satisfying $|u_n|\le  \rho^{\lambda_n}$ for some $\rho\in (0,1)$, hence $u_n\to 0$ as $n\to\infty$.
Fix some positive $\epsilon$ so that  $\epsilon<\frac{1-\rho}{2\rho}$ thus $1+\epsilon<\frac{\rho +1}{2\rho}$.
Together with $(\ref{rnkbounda})$ shows that

\begin{eqnarray}
|\langle f , r_n\rangle\cdot u_n| & \le & ||f||_{L^2 (0,1)}\cdot m_{\epsilon}(1+\epsilon)^{\lambda_n}\cdot \rho^{\lambda_n}\nonumber\\
& < & ||f||_{L^2 (0,1)}\cdot m_{\epsilon}\cdot \left(\frac{\rho +1}{2}\right)^{\lambda_n}.\label{upperbound}
\end{eqnarray}

One now deduces from $(\ref{upperbound})$ that
\[
T(f(z))=\sum_{n=1}^{\infty} \langle f , r_n\rangle\cdot u_n\cdot  z^{\lambda_n}.
\]
defines an analytic function in the slit disk
\[
\left\{z:\,\, |z|<\frac{2}{\rho +1}\right\}\setminus \left(-\frac{2}{\rho +1}, 0\right),
\]
converging uniformly on its compact subsets. Since $\frac{2}{\rho +1}>1$ then the series $T(f(z))$
converges uniformly on the closed interval $[0,1]$. This means that

$(i)$ $T(f(z))$ belongs to the space $\overline{M_{\Lambda}}$.

$(ii)$ The series $T(f)$ converges in the $L^2(0,1)$ norm.

It also follows from $(\ref{upperbound})$ that there exists some $B>0$ so that
\[
||T(f)||_{L^2(0,1)}\le B||f||_{L^2(0,1)}\quad \text{for\,\, all}\quad f\in \overline{M_{\Lambda}}.
\]
Therefore, $T: \overline{M_{\Lambda}} \to \overline{M_{\Lambda}}$ defines a bounded linear operator
and we denote by $T^*$  its adjoint operator.

\smallskip

Next we prove that $T$ is a compact operator on $\overline{M_{\Lambda}}$.
If $L$ is a bounded operator on $\overline{M_{\Lambda}}$, we use $\| L \|$ to
denote the operator norm of $L$ which is the supremum of the set
$ \{ \| L(f)\|_{L^2(0,1)}: f\in \overline{M_{\Lambda}}, \| f\|_{L^2(0,1)}=1 \}$.

Define now finite rank operators $T_m$ on $\overline{M_{\Lambda}}$ by
\[
T_m(f)(x):=\sum_{n=1}^m \langle f, r_n \rangle u_n x^n\qquad m=1,2,\dots
\]
Let $f$ be a unit vector in $\overline{M_{\Lambda}}$. It is easy to see that
\[
\|(T-T_m)(f)\|_{L^2(0,1)}\leq \sum_{n=m+1}^\infty \| \langle f, r_n \rangle u_n e_n \|_{L^2(0,1)}\qquad \text{where}\quad e_n(x)=x^{\lambda_n}.
\]

From $(\ref{upperbound})$ we get $\| \langle f, r_n \rangle u_n e_n  \|_{L^2(0,1}\leq
m_\epsilon\cdot  \left(\frac{\rho +1}{2}\right)^{\lambda_n}\cdot \| e_n \|_{L^2(0,1)}.$
Clearly $\| e_n \|_{L^2(0,1)}\le 1$
thus  $\| \langle f, r_n \rangle u_n e_n  \|_{L^2(0,1)}\leq m_\epsilon\cdot  \left(\frac{\rho +1}{2}\right)^{\lambda_n}$.
Therefore,
\[
\|(T-T_m)(f)\|_{L^2(0, 1)}\leq m_\epsilon  \sum_{n=m+1}^\infty \left(\frac{\rho +1}{2}\right)^{\lambda_n}.
\]
Since $\rho<1$  then $\|T-T_m\|$ tends to zero as $m$ tends to infinity.
Thus, the finite rank operators $\{ T_m\}_{m=1}^{\infty}$ converge to $T$ in the uniform operator topology, hence
$T$ is compact.

2. Due to biorthogonality from $(\ref{operator})$ we get
\[
T(e_k)=u_k e_k.
\]

3.  For fixed $k\in\mathbb{N}$ and any $n\in\mathbb{N}$ we have
\begin{eqnarray}
\langle T^*r_k-\overline{u_k} r_k, e_n\rangle & = & -\overline{u_k}\langle r_k, e_n\rangle +\langle r_k, Te_n\rangle\nonumber\\
& = & -\overline{u_k}\langle r_k, e_n\rangle + \langle r_k, u_n e_n\rangle\nonumber\\
& = & -\overline{u_k}\langle r_k, e_n\rangle + \overline{u_n}\langle r_k, e_n\rangle. \label{zero}
\end{eqnarray}
Clearly $(\ref{zero})$ equals zero if $n=k$ and due to the biorthogonality the same holds for all $n\not= k$. Hence
$\langle T^*r_k-\overline{u_k} r_k, e_n\rangle=0$ for all $n\in\mathbb{N}$. The completeness of $M_{\Lambda}$ in $\overline{M_{\Lambda}}$
means that
\[
T^*r_k-\overline{u_k}\cdot r_k=\mathbf{0}.
\]

4. For any $n\in\mathbb{N}$ we have
\[
\langle Tf, r_n\rangle =\langle f, T^*r_n\rangle = \langle f, \overline{u_n} r_n\rangle.
\]
If $Tf=0$ we get $\langle f, r_n\rangle =0$ for all $n\in\mathbb{N}$. It follows that $f$ is the zero function since the family $\{r_n\}_{n=1}^{\infty}$ is complete in $\overline{M_{\Lambda}}$. Thus the kernel of $T$ is trivial.\\

\smallskip

5. Suppose now that $Tf=\lambda f$ for some $\lambda\notin\{u_n\}_{n=1}^{\infty}$ and $f\not=\mathbf{0}$. Then,
\begin{align}
\lambda\langle f, r_n\rangle& =  \langle Tf, r_n\rangle, \notag \\
& =\langle f, T^*r_n\rangle \notag, \\
&= \langle f, \overline{u_n} r_n\rangle, \notag \\
&=u_n\langle f, r_n\rangle\qquad \text{for all}\quad n\in\mathbb{N}.\nonumber
\end{align}
Hence
\[
(\lambda-u_n)\cdot \langle f, r_n\rangle=0\qquad \text{for all}\quad n\in\mathbb{N}.
\]
Since $\lambda\notin\{u_n\}_{n=1}^{\infty}$ then $\langle f, r_n\rangle=0$ for all $n\in\mathbb{N}$ and the completeness of
the family $\{r_n\}_{n=1}^{\infty}$ in $\overline{M_{\Lambda}}$ means that $f=\mathbf{0}$, a contradiction.
Thus the only non-zero eigenvalues are the $u_k$'s and
the compactness of $T$ means that its spectrum is
${\{ 0\} \cup \{ u_k\}_{k=1}^{\infty}}$.

6. Next, suppose that $Tf=u_k f$ for some $u_k\in\{u_n\}_{n=1}^{\infty}$ and some $f\in \overline{M_{\Lambda}}$.
Similarly to part 5 above, we get

\[
(u_k-u_n)\langle f , r_n\rangle = 0 \qquad \text{for all}\quad n\in\mathbb{N}.
\]
If $n\not= k$ then $u_n\not= u_k$, therefore $\langle f , r_n\rangle = 0$ for all $n\not= k$. However,
\[
f(t)=\sum_{n=1}^{\infty} \langle f , r_n\rangle\cdot  e_n(t),
\]
hence we conclude that $f(t)=\langle f , r_k\rangle e_k$, meaning that the eigenvalue $u_k$ is simple.

7. If $T$ is a normal operator, then
\[
TT^*(e_k)=T^*T(e_k)\qquad \text{for\,\,all}\quad k\in\mathbb{N}.
\]
But for a normal operator any two eigenvectors corresponding to different eigenvalues are orthogonal.
However, the set of eigenvectors $e_n=t^{\lambda_n}$ of $T$ are not orthogonal, hence $T$ is not normal.

This completes the proof of Theorem $\ref{Spectral Synthesis}$.

\section{Proof of Theorem $\ref{HS}$}
\setcounter{equation}{0}

Suppose first that $(\ref{frame})$ holds. Since $f$ belongs to the space $\overline{M_{\Lambda}}$ then
by Theorem $\ref{biorthogonalsystem}$ $f$ extends analytically in the unit disk $\mathbb{D}$ and is represented there as
$f(z)=\sum \langle f, r_n\rangle z^{\lambda_n}$. It then follows from the right relation in $(\ref{frame})$
that $f$ belongs to the space $H^2 (\mathbb{D}, \Lambda)$.

On the other hand, suppose now that $f$ belongs to $H^2 (\mathbb{D}, \Lambda)$ thus $f(z)=\sum_{n=1}^{\infty} c_n z^{\lambda_n}$ and
$\sum_{n=1}^{\infty} |c_n|^2<\infty$. From Lemma $\ref{L2}$ we know that $f$ belongs to the space $L^2 (0,1)$.
It then follows from Lemma $\ref{converse}$ that $f$ belongs to the space $\overline{M_{\Lambda}}$, thus from $(\ref{representationf})$
we have the series representation $f(z)=\sum_{n=1}^{\infty}\langle f, r_n\rangle z^{\lambda_n}$.
But the coefficients of a power series are unique,
thus $c_n=\langle f, r_n\rangle$. Since $\sum_{n=1}^{\infty}|c_n|^2<\infty$ then one has
$\sum_{n=1}^{\infty}|\langle f, r_n\rangle |^2<\infty$. Therefore we conclude that $(\ref{frame})$ holds.

Thus $(\ref{frame})$ is both a necessary and a sufficient condition in order for an analytic function in the disk $\mathbb{D}$ to belong to
$H^2 (\mathbb{D}, \Lambda)$. This finishes the proof of Theorem $\ref{HS}$.

\section{Acknowledgments}
I would like to thank Professor W. Lusky for private communication.

\end{document}